\def\X{{\bf X}}
\newtheorem{algorithm}[theorem]{Algorithm}
\newcommand{\OT}{\mathcal O}
\def\SymbReg{\textsuperscript{\textregistered}}
\begin{document}

\pagestyle{headings}

\title{Simulations of Multiscale Schr\"odinger Equations with Multiscale Splitting Approaches: Theory and Application}
\author{J\"urgen Geiser\inst{1} \and Amirbahador Nasari\inst{2}}
\institute{Ruhr University of Bochum, \\
Department of Electrical Engineering and Information Technology, \\
Universit\"atsstrasse 150, D-44801 Bochum, Germany \\
\email{juergen.geiser@ruhr-uni-bochum.de}
\and
Ruhr University of Bochum, \\
Department of Civil and Environmental Engineering, \\
Universit\"atsstrasse 150, D-44801 Bochum, Germany \\
\email{amirbahador.nasari@ruhr-uni-bochum.de}
}

\maketitle

\begin{abstract}

In this paper we present a novel multiscale splitting approach to solve 
multiscale Schr\"odinger equation, which have large different time-scales.
The energy potential is based on highly oscillating functions,
which are magnitudes faster than the transport term.
We obtain a multiscale problem and a highly stiff problem, while
standard solvers need to small time-steps.
We propose multiscale solvers, which are based on operator 
splitting methods and we decouple the diffusion and reaction part of the 
Schr\"odinger equation.
Such a decomposition allows to apply a large time step 
for the implicit time-discretization of the diffusion part and
small time steps for the explicit and highly oscillating reaction part. 
With extrapolation steps, we could reduce the computational time
in the highly-oscillating time-scale, while we relax into the 
slow time-scale.
We present the numerical analysis of the extrapolated operator
splitting method.
First numerical experiments verified the benefit of the
extrapolated splitting approaches.

\end{abstract}

{\bf Keywords}: Schr\"odinger equation, multiscale splitting approaches, operator splitting, extrapolation methods, blow-up problem, oscillating problem.

{\bf AMS subject classifications.} 35K25, 35K20, 74S10, 70G65.

\section{Introduction}

The Schr\"odinger Equation is wel-known for modelling the basis 
of quantum mechanics. 
The state of a particle is described by its wave-function $ \Psi$,
which is a function of space (position) $x$ and time $t$.
The wave-function is given as a complex variable and it is delicate to
attribute any distinct physical meaning to such complex notation.
We consider a delicate multiscale problem based on 
different time-scales in the diffusion and reaction part of the
time-dependent Schr\"odinger equation, see \cite{tak2008}, \cite{griffiths2004} and \cite{singh2016}.
Based on the large scale-dependencies, we have to apply 
time-restrictions to the numerical methods, see \cite{hai96},
such that we apply multiscale methods to decompose the different scales
and accelerate the solver process. We propose novel multiscale methods,
which are based on AB-splitting and ABA-splitting methods,
see \cite{fargei05} and \cite{geiser_2009_1}, and additional modification with extrapolation 
and averaging ideas. We also test heterogeneous multiscale methods
(HMM), see \cite{we_2007} and apply the ideas of the equation-free methods (EFM), see \cite{kev2009}.
We compare the different novel splitting approaches with the standard splitting schemes, see also \cite{geiser2018}.
Such combinations allow to overcome the delicate stiffness problems of the
scale-dependent Schr\"odinger equation and reduce the computational time.

The paper is outlined as following. The model is introduced in Section \ref{modell}.
In Section \ref{methods}, we discuss the different numerical methods and present the
convergence analysis. The numerical experiments are done in Section \ref{numeric} and
the conclusion is presented in Section \ref{concl}.

\section{Mathematical Model}
\label{modell}

In the following, we deal with the multiscale Schr\"odinger equation,
which is given for the first dimension as:
\begin{eqnarray}
\label{schroed_1}
&& i \; \epsilon \frac{\partial \Psi}{\partial t} = - \epsilon^2 \frac{\partial^2}{\partial x^2} \Psi +  V(x) \Psi , \\
\label{schroed_2}
&& \frac{\partial \Psi}{\partial t} = i \; \epsilon \frac{\partial^2}{\partial x^2} \Psi - i \; \frac{1}{\epsilon}  V(x) \Psi ,
\end{eqnarray}
where $\epsilon \in (0,1)$, $V(x)$ is the potential, e.g., $x^2$ or $\sin(2 \pi x)$ or $\exp(\sin(2 \pi x))$.

He, we assume the connection to the instationary Schr\"odinger equation,
which is given if we apply $\hbar \rightarrow \epsilon$ and $\frac{\hbar^2}{m} \rightarrow \epsilon^2$.

For the different numerical schemes, we deal with two equations:

\begin{itemize}
\item Scaled equation (test example) \\
In the following, we apply a scaled Schr\"odinger equation with $\epsilon$,
where we scale $L = 1.0$, therefore $\epsilon \in [10^{-6}, 10^{-3}]$.
\begin{align}
\label{multi_schroed_1}
i \epsilon \frac{\partial \Psi(x,t)}{\partial t} &= - \epsilon^2 \frac{\partial^2 \Psi(x,t)}{\partial x^2} + V(x) \Psi(x,t) , \\
\label{multi_schroed_2}
  \frac{\partial \Psi(x,t)}{\partial t} &= i \; \epsilon \frac{\partial^2 \Psi(x,t)}{\partial x^2} - \frac{i}{\epsilon} V(x) \Psi(x,t),
\end{align}
where the initial condition is given as $\Psi(x, 0) = sin(\pi x)$ with $x \in [0, 1]$ and $t \in [0, 1]$.

\item Unscaled equation (full Schr\"odinger equation)
\begin{eqnarray}
\label{instat_diff_1}
&& i \hbar \frac{\partial}{\partial t} \Psi(x,t) =  \hat{H} \Psi(x,t), \\
&& \frac{\partial}{\partial t} \Psi(x,t) = - i \; \frac{1}{\hbar}  \hat{H} \Psi(x,t)  ,
\end{eqnarray}
where $\hat{H} = - \frac{\hbar^2}{2 \mu} \nabla^2 + V(x,t)$. \\
We apply the initial conditions: \\
$\Psi(x, 0) = \exp(-0.5 (\frac{x- x_c}{s})^2)  cos(\frac{2 \pi (x-x_c)}{\lambda}) + i  \exp(-0.5 (\frac{x- x_c}{s})^2)  sin(\frac{2 \pi (x-x_c)}{\lambda})$, where $x \in [0, L]$, where we have $L= 4 \; 10^{-9} \; [m]$, $\lambda = L / 40$ and $s = L/25$.

Further, we assume $V(x,t)=0$ and $V(x,t) = e \;x^2$, \\
where $e =1.6021766 \; 10^{-19} \; [C]$ is the charge of the electron.

\end{itemize}

\begin{remark}

The instationary Schr\"odinger equation is given as:
\begin{eqnarray}
\label{instat_diff_1}
&& i \hbar \frac{\partial}{\partial t} \Psi(x,t) =  \hat{H} \Psi(x,t), \\
&& \frac{\partial}{\partial t} \Psi(x,t) = - i \; \frac{1}{\hbar}  \hat{H} \Psi(x,t)  ,
\end{eqnarray}
where $\hat{H} = - \frac{\hbar^2}{2 \mu} \nabla^2 + V(x,t)$. further $\mu$ is the particle's reduced mass, $i$ is imaginary unit, V(x,t) is its potential energy, $\nabla^2$ is the Laplacian (a differential operator), 
and $\Psi$ is the wave function.
The instationary Schr\"odinger equation can be rewritten with respect to an $\epsilon$-parameter to a multiscale Schr\"odinger equation, see Equation (\ref{schroed_1}) and (\ref{schroed_2}).

\end{remark}

\section{Numerical methods}
\label{methods}

In the following, we present the different splitting 
methods, which are discussed as:
\begin{itemize}
\item Standard operator splitting methods:
\begin{itemize}
\item AB-splitting method or Lie-Trotter splitting method, see \cite{trotter59},
which solve each operator in a separate equation and apply different time-steps for each operator.
\item ABA-splitting or  BAB-splitting method or Strang splitting method, see \cite{stra68}, which improves the AB-splitting method with respect to additional
steps for the separated operators. Further, we also assume to deal with the different time-steps for each operator.
\end{itemize}
\item Modified multiscale splitting methods.
\begin{itemize}
\item HMM-AB-splitting method, see \cite{geiser2018},
which combines the Heterogeneous Multiscale method and the AB-splitting method, such that we could embed the
microscopic operator equation with a smaller amount of microscopic time-steps into the macroscopic operator equation.
\item Extrapolated AB-splitting method, see the ideas in  \cite{kev2009}, which combines the equation free methods and the AB-splitting method.
Therefore, we could reduce the large amount of microscopic time-steps of the microscopic operator equation and apply extrapolation methods and
embed the results into the macroscopic operator equation.
\item Higher-Order Extrapolated ABA-splitting method, see the ideas in  \cite{kev2009}, here we combine higher order
extrapolation methods and ABA-splitting method.
Therefore, we could reduce the large amount of microscopic time-steps of the microscopic operator equation and apply extrapolation methods and
embed the results into the macroscopic operator equation. Further, we also obtain a second order scheme.
\end{itemize}
\end{itemize}

\subsection{Instationary Schr\"odinger equation}

We discretize the instationary Schr\"odinger equation (\ref{instat_diff_1})
with implicit time-discretization and second order
spatial discretization methods.

The discretization is given as:
\begin{align}
\label{diff_1}
 i \hbar \frac{\Psi(x,t)-\Psi(x,t-\Delta t)}{\Delta t} & = - \frac{\hbar^2}{2 m} \frac{\Psi(x+\Delta x,t) - 2\Psi(x,t) + \Psi(x-\Delta x,t)}{\Delta x^2} + \\
& + V(x,t) \Psi(x,t), \nonumber
\end{align}
where we obtain:
\begin{align}
\label{schroed_1_4}
 \frac{\Psi(x,t)-\Psi(x,t-\Delta t)}{\Delta t} &= i \frac{\hbar}{2 m} \frac{\Psi(x+\Delta x,t) - 2\Psi(x,t) + \Psi(x-\Delta x,t)}{\Delta x^2} - \\
& - \frac{i}{\hbar} V(x,t) \Psi(x,t) . \nonumber
\end{align}

Then, we simplify the discretized Equation (\ref{schroed_1_4}) and obtain:
\begin{align}
\label{schroed_1_5}
\Psi(x,t-\Delta t) &=  - \frac{i \; \hbar \Delta t}{ 2 m \Delta x^2 } (\Psi(x+\Delta x,t) - 2\Psi(x,t) + \\ 
+ \Psi(x-\Delta x,t))+(1 + \frac{i \; \Delta t V(x,t)}{\hbar})\Psi(x,t) . \nonumber
\end{align}

Then, we rewrite the Equation (\ref{schroed_1_5}) in a matrix-notation, which can be programmed to software-package, e.g., MATLAB\SymbReg:
\begin{align}
\label{diff_1}
& \Psi(k-1) =  - \frac{i\; \hbar \Delta t}{2 m \Delta x^2 } \begin{bmatrix}
    -2 & 1 &   0 & 0 & \dots  & 0 \\
     1 & -2 &  1 & 0 & \dots  & 0 \\
     0 &  1 & -2 & 1 & \dots  & 0 \\
    \vdots & \vdots & \vdots & \vdots & \ddots & \vdots \\
    0 & 0 & 0 & 0 & \dots  & -2
\end{bmatrix}
\Psi(k)  \\
& \hspace{-0.5cm} + \begin{bmatrix}
    (1 + \frac{i \; \Delta t V(x_0, t)}{\hbar}) & 0 &   0 & 0 & \dots  & 0 \\
     0 & (1+\frac{i \; \Delta t V(x_2, t)}{\hbar}) &  0 & 0 & \dots  & 0 \\
     0 &  0 & (1+\frac{i \; \Delta t V(x_3, t)}{\hbar}) & 0 & \dots  & 0 \\
    \vdots & \vdots & \vdots & \vdots & \ddots & \vdots \\
    0 & 0 & 0 & 0 & \dots  & (1 + \frac{i \; \Delta t V(x_{M}, t)}{\hbar})
\end{bmatrix}  \Psi(k) , \nonumber
\end{align}
where $j= 0, 1, \ldots, J$ are the spatial points with $\Delta x = 1/J$ and $x_j = j \; \Delta x$.
Then, we obtain the operator equation:
\begin{align}
\label{diff_1}
\Psi(k-1)=\textbf{B}\Psi(k) , \\
\Psi(k)=\mathbf{B}^{-1}\Psi(k-1) .
\end{align}

\begin{remark}

The standard finite difference discretization with implicit \\
time-discretization
and second order spatial discretization leads to large linear equation systems.
Here, we do not separate the operators with respect to their different
time-steps, such that we solve a stiff equation system and neglect the
microscopic operator, see \cite{butcher2008} and \cite{geiser_2016}.
\end{remark}

\subsection{AB and ABA Splitting for the Multiscale Schr\"odinger equation}

We apply the following idea AB splitting method, while the diffusion operator is implicit discretized in time (large time steps) and the
potential operator (reaction operator) is explicit discretized in time.

We apply the following discretization in time and space with the point $(x,t)$ as:
\begin{eqnarray}
\label{diff_1}
&& (1 - \frac{i \; \Delta t V(x)}{\epsilon})\Psi(x,t - \Delta t) = \nonumber \\
&& = \Psi(x,t) - \frac{i \; \epsilon \Delta t}{\Delta x^2} (\Psi(x+\Delta x,t) - 2\Psi(x,t) + \Psi(x-\Delta x,t)).
\end{eqnarray}

Then, we have the semi-discretized operator equation as:
\begin{align}
(I - \Delta t\textbf{A}_1) \Psi(k-1) &  = (I + \Delta t \textbf{A}_2) \Psi(k) , \\
\textbf{B}_1 \Psi(k-1) &  = \textbf{B}_2 \Psi(k) , 
\end{align}
where the potential matrix is $\textbf{B}_1 = \begin{bmatrix}
    1 - \tilde{a}_0 & 0 &   0 & 0 & \dots  & 0 \\
    0 & 1 - \tilde{a}_1 & 0 & 0 & \dots  & 0 \\
     0 &  0 & 1 - \tilde{a}_2 & 0 & \dots  & 0 \\
    \vdots & \vdots & \vdots & \vdots & \ddots & \vdots \\
    0 & 0 & 0 & 0 & \dots  & 1 -\tilde{a}_M
\end{bmatrix} , $

with $\tilde{a}_j = \frac{i \; \Delta t V(x_j)}{\epsilon}$ and \\
and the diffusion matrix is 
$\textbf{B}_2 = \begin{bmatrix}
    1 + 2 a & - a &   0 & 0 & \dots  & 0 \\
    - a & 1 + 2 a &  - a & 0 & \dots  & 0 \\
     0 &  - a & 1 + 2 a & -a & \dots  & 0 \\
    \vdots & \vdots & \vdots & \vdots & \ddots & \vdots \\
    0 & 0 & 0 & 0 & \dots  & 1 + 2 a
\end{bmatrix} , $ 

with $a =  \frac{i \; \epsilon \Delta t}{\Delta x^2 }$.

We have the AB-splitting method as:
\begin{align}
\label{diff_1}
& u_1(k) = \textbf{B}_1 \Psi(k-1) , \\
& \Psi(k) = (\textbf{B}_2)^{-1} u_1(k) . 
\end{align}

Based on the small scale of operator $\textbf{B}_1$, we improve as:
\begin{align}
\label{diff_1}
& u_1(k-1,m+1) = \tilde{\textbf{B}}_1 \Psi(k-1,m) ,
\end{align}
with $\tilde{\textbf{B}}_1 = \begin{bmatrix}
    1 - \tilde{\tilde{a}}_0 & 0 &   0 & 0 & \dots  & 0 \\
    0 & 1 - \tilde{\tilde{a}}_1 & 0 & 0 & \dots  & 0 \\
     0 &  0 & 1 - \tilde{\tilde{a}}_2 & 0 & \dots  & 0 \\
    \vdots & \vdots & \vdots & \vdots & \ddots & \vdots \\
    0 & 0 & 0 & 0 & \dots  & 1 - \tilde{\tilde{a}}_M
\end{bmatrix} , $

with $\tilde{\tilde{a}}_j = \frac{i \; \delta t V(x_j)}{\epsilon}$.

Further, we have $m = 0, \ldots, M-1$ and $\delta t = \frac{\Delta t}{M}$ with $M = \frac{1}{\epsilon}$ and $M$ is assumed to be an integer. We have $\Psi(k-1,0) = \Psi(k-1)$ as the initialization and  $\Psi(k) = \Psi(k-1, M)$ as the result.

The algorithm is given as:
\begin{align}
\label{diff_1}
& u_1(k) = (\tilde{\textbf{B}}_1)^M \Psi(k-1) , \\
& \Psi(k) = (\textbf{B}_2)^{-1} u_1(k) ,
\end{align}
where $M$ is the number of intermediate time steps in the
microscopic scale means $\delta t = \Delta t / M$.

The AB splitting method is given in Algorithm \ref{algo_1} as:
\begin{algorithm}
\label{algo_1}

\begin{itemize}
\item Step 1: Solving the microscopic equation (potential part):
\begin{eqnarray}
& u_1(t^{n+1}) = (\tilde{\textbf{B}}_1)^M \Psi(t^n) , 
\end{eqnarray}
with $\tilde{\textbf{B}}_1$ is the discretized potential operator (small scale) and $M$ is the number of small time steps with $\delta t = \Delta t /M$.
\item Step 2: Solving the macroscopic equation (diffusion part)
\begin{eqnarray}
& \Psi(t^{n+1}) = (\textbf{B}_2)^{-1} u_1(t^{n+1}) ,
\end{eqnarray}
with $\textbf{B}_2$ is the discretized diffusion operator (large scale).

\end{itemize}

\end{algorithm}

\begin{remark}
For the ABA and BAB methods, we have socalled three step methods, 
while we apply for example the A operator with the $\Delta t/2$ timestep 
(at least 2 times, means at the step 1 and step 3) and the 
one time the $\Delta t$ timestep for B.
\end{remark}

The ABA splitting method is given in Algorithm \ref{algo_1_1} as:
\begin{algorithm}
\label{algo_1_1}

\begin{itemize}
\item Step 1: Solving the microscopic equation (potential part):
\begin{eqnarray}
& u_1(t^{n+1}) = (\tilde{\textbf{B}}_1)^M \Psi(t^n) , 
\end{eqnarray}
with $\tilde{\textbf{B}}_1$ is the discretized potential operator (small scale) and $M$ is the number of small time steps with $\delta t = (\Delta t/2)/ M$.
\item Step 2: Solving the macroscopic equation (diffusion part)
\begin{eqnarray}
& u_2(t^{n+1}) = (\textbf{B}_2)^{-1} u_1(t^{n+1}) ,
\end{eqnarray}
with $\textbf{B}_2$ is the discretized diffusion operator (large scale).

\item Step 3: Solving the microscopic equation (potential part):
\begin{eqnarray}
& \Psi(t^{n+1}) = (\tilde{\textbf{B}}_1)^M u_2(t^{n+1}) , 
\end{eqnarray}
with $\tilde{\textbf{B}}_1$ is the discretized potential operator (small scale) and $M$ is the number of small time steps with $\delta t = (\Delta t / 2)/ M$.
\end{itemize}

\end{algorithm}

The BAB splitting method is given in Algorithm \ref{algo_2_1},
where we have the first B-step with $(B_2)^{-1}$ and $\Delta t/2$,
then the  A-step with $(\tilde{B}_1)^M$ and time-step $\delta t = \Delta t/M$ 
and the second B-step with $(B_2)^{-1}$ with $\Delta t/2$.
\begin{algorithm}
\label{algo_2_1}

\begin{itemize}
\item Step 1: Solving the macroscopic equation (diffusion part) with
a half time step $\Delta t/2$
\begin{eqnarray}
& u_1(t^{n+1}) = (\textbf{B}_2)^{-1} \phi(t^{n}) ,
\end{eqnarray}
with $\textbf{B}_2$ is the discretized diffusion operator (large scale)
with the half time-step means $\Delta t/2$.

\item Step 2: Solving the microscopic equation (potential part):
\begin{eqnarray}
& u_2(t^{n+1}) = (\tilde{\textbf{B}}_1)^M u_1(t^{n+1}) , 
\end{eqnarray}
with $\tilde{\textbf{B}}_1$ is the discretized potential operator (small scale) and $M$ is the number of small time steps with $\delta t = \Delta t/ M$.

\item Step 3:  Solving the macroscopic equation (diffusion part) with a
next half time-step $\Delta t/2$
\begin{eqnarray}
& \phi(t^{n+1}) = (\textbf{B}_2)^{-1} u_2(t^{n+1}) ,
\end{eqnarray}
with $\textbf{B}_2$ is the discretized diffusion operator (large scale)
with the half time-step means $\Delta t/2$.

\end{itemize}

\end{algorithm}

\begin{remark}

Here, we have the stability condition with respect for
reaction term, which is give as:
\begin{eqnarray}
\delta t \le \frac{\epsilon}{V(x)},
\end{eqnarray}
where $V(x) = e \; x^2$ is a very small number, such that we only have a critical
time-step, if we assume $\epsilon \approx e$, then we have to deal with small time-steps.

\end{remark}

\begin{remark}
\label{rem_AB_ABA}

Based on the operator splitting methods, see also  \cite{fargei05} and \cite{geiser2011},
we have the following splitting errors:
\begin{itemize}
\item AB-splitting:
\begin{eqnarray}
err_{AB, local}(\Delta t) \le \Delta t^2 C_{AB},
\end{eqnarray}
where the constant $C_{AB}$ is depending of $\| (\textbf{A}_1 \|$, $\| (\textbf{A}_2 \|$, see \cite{geiser2011}.
\item AB-splitting:
\begin{eqnarray}
err_{ABA, local}(\Delta t) \le \Delta t^3 C_{ABA},
\end{eqnarray}
where the constant $C_{ABA}$ is depending of $\| (\textbf{A}_1 \|$, $\| (\textbf{A}_2 \|$, see \cite{geiser2011}.
\end{itemize}

Further $\| \cdot \|$ is an appropriated matrix norm in the appropriate Hilbert-space.
\end{remark}

\subsection{Modified Multiscale AB for the Multiscale Schr\"odinger equation}

We start with the discretization of the multiscale Schr\"odinger
equation (\ref{schroed_1}) and (\ref{schroed_2}).

We modify the small scale of operator $\textbf{B}_1$ as following:
\begin{align}
\label{diff_1}
& u_1(k-1,m+1) = \tilde{\textbf{B}}_1 \Psi(k-1,m) ,
\end{align}
where $m = 0, \ldots, M-1$ and $\delta t = \frac{\Delta t}{M}$ with $M = \frac{1}{\epsilon}$ and $M$ is assumed to be an integer. 
We have $\Psi(k-1,0) = \Psi(k-1)$ as the initialisation and $\Psi(k) = \Psi(k-1, m)$
with $m = 1, \ldots, \tilde{M}$ the successor results with $\tilde{M} \le M$.

We have the following multiscale AB algorithms:

\begin{enumerate}
\item Full-AB: \\
The algorithm is given as:
\begin{align}
\label{diff_1}
& u_1(k) = (\tilde{\textbf{B}}_1)^M \Psi(k-1) , \\
& \Psi(k) = (\textbf{B}_2)^{-1} u_1(k) ,
\end{align}
where $M$ is the number of intermediate time steps in the
microscopic scale means $\delta t = \Delta t / M$.
Here, we applied the full time-interval.

\item HMM-AB: \\
The algorithm is given as:
\begin{align}
\label{diff_1}
& u_1(k-1, m) = (\tilde{\textbf{B}}_1)^m \Psi(k-1) , \;  \mbox{for} \; m = 1, \ldots, \tilde{M} , \\
& u_1(k) = \frac{1}{\tilde{M}} \; \sum_{m=1}^{\tilde M} u_1(k-1, m) , \\
& \Psi(k) = (\textbf{B}_2)^{-1} u_1(k) ,
\end{align}
where $M$ is the number of intermediate time steps in the
microscopic scale means $\delta t = \Delta t / M$. \\

\item Extrapolated-AB: \\
The algorithm is given as:
\begin{align}
\label{extra_1_1}
& u_1(k-1, \tilde{M}-1) = (\tilde{\textbf{B}}_1)^{\tilde{M}-1} \Psi(k-1) , \\
& u_1(k-1, \tilde{M}) = (\tilde{\textbf{B}}_1)^{\tilde{M}} \Psi(k-1) , \\
& u_1(k) = u_1(k-1, \tilde{M} - 1) + \nonumber \\
& + (\Delta t - \delta t \; (\tilde{M}-1)) \frac{u_1(k-1, \tilde{M}) -  u_1(k-1, \tilde{M} - 1)}{\delta t} , \\
\label{extra_1_2}
& \Psi(k) = (\textbf{B}_2)^{-1} u_1(k) ,
\end{align}
where $M$ is the number of intermediate time steps in the
microscopic scale means $\delta t = \Delta t / M$.

\item Higher-order Extrapolated-AB: \\
The algorithm is given as:
\begin{align}
\label{extra_2_1}
& u_1(k-1, \tilde{M}-1) = (\tilde{\textbf{B}}_1)^{\tilde{M}-1} \Psi(k-1) , \\
& u_1(k-1, \tilde{M}-2) = (\tilde{\textbf{B}}_1)^{\tilde{M}-2} \Psi(k-1) , \\
& u_1(k-1, \tilde{M}) = (\tilde{\textbf{B}}_1)^{\tilde{M}} \Psi(k-1) , \\
& u_1(k) = u_1(k-1, \tilde{M} - 1) + \nonumber \\
& + (\Delta t - \delta t \; (\tilde{M}-1)) \frac{u_1(k-1, \tilde{M}) -  u_1(k-1, \tilde{M} - 1)}{\delta t} + \nonumber \\
& + \frac{(\Delta t - \delta t \; (\tilde{M}-1))^2}{2} \cdot \nonumber \\
& \cdot  \frac{(u_1(k-1, \tilde{M}) - 2 u_1(k-1, \tilde{M} - 1) + u_1(k-1, \tilde{M}-2) )}{(\delta t)^2} , \\
\label{extra_2_2}
& \Psi(k) = (\textbf{B}_2)^{-1} u_1(k) ,
\end{align}
where $M$ is the number of intermediate time steps in the
microscopic scale means $\delta t = \Delta t / M$.

\item Higher-order Extrapolated-ABA: \\
The algorithm is given as:
\begin{align}
\label{extra_2_1}
& u_1(k-1, \tilde{M}-1) = (\tilde{\textbf{B}}_1)^{\tilde{M}-1} \Psi(k-1) , \\
& u_1(k-1, \tilde{M}-2) = (\tilde{\textbf{B}}_1)^{\tilde{M}-2} \Psi(k-1) , \\
& u_1(k-1, \tilde{M}) = (\tilde{\textbf{B}}_1)^{\tilde{M}} \Psi(k-1) , \\
& u_1(k) = u_1(k-1, \tilde{M} - 1) + \nonumber \\
& + (\Delta t - \delta t \; (\tilde{M}-1)) \frac{u_1(k-1, \tilde{M}) -  u_1(k-1, \tilde{M} - 1)}{\delta t} + \nonumber \\
& + \frac{(\Delta t - \delta t \; (\tilde{M}-1))^2}{2} \cdot \nonumber \\
& \cdot \frac{(u_1(k-1, \tilde{M}) - 2 u_1(k-1, \tilde{M} - 1) + u_1(k-1, \tilde{M}-2) )}{(\delta t)^2} , \\
& u_2(k) = (\textbf{B}_2)^{-1} u_1(k) , \\
& u_3(k-1, \tilde{M}-1) = (\tilde{\textbf{B}}_1)^{\tilde{M}-1} u_2(k) , \\
& u_3(k-1, \tilde{M}-2) = (\tilde{\textbf{B}}_1)^{\tilde{M}-2} u_2(k) , \\
& u_3(k-1, \tilde{M}) = (\tilde{\textbf{B}}_1)^{\tilde{M}} u_2(k) , \\
& \label{extra_2_2}
\Psi(k) = u_3(k-1, \tilde{M} - 1) + \\
& + (\Delta t - \delta t \; (\tilde{M}-1)) \frac{u_3(k-1, \tilde{M}) -  u_3(k-1, \tilde{M} - 1)}{\delta t} + \nonumber \\
& + \frac{(\Delta t - \delta t \; (\tilde{M}-1))^2}{2} \cdot \nonumber \\
& \cdot \frac{(u_3(k-1, \tilde{M}) - 2 u_3(k-1, \tilde{M} - 1) + u_3(k-1, \tilde{M}-2) )}{(\delta t)^2} , \nonumber
\end{align}
where $M$ is the number of intermediate time steps in the
microscopic scale means $\delta t = (\Delta t / 2) / M$.

\end{enumerate}

In the following, we have the remark to the 
extrapolation methods, see Remark \ref{rem_extra_1}.

\begin{remark}
\label{rem_extra_1}

We assume to compute $u^{n+1}$ and apply the Taylor-expansion as following:
\begin{eqnarray}
\label{series}
u(t^{n+1}) = u(t^n) + \Delta t \frac{\partial u}{\partial t}|_{t^n} +  \frac{\Delta t^2}{2!} \frac{\partial^2 u}{\partial t^2}|_{t^n} + \frac{\Delta t^3}{3!} \frac{\partial^2 u}{\partial t^2}|_{t^n} + \ldots ,
\end{eqnarray}
where $\Delta t = t^{n+1} - t^n$.

We assume to deal with finer time-steps $\delta t = \Delta t / M$ and we have computed a series of finer resolutions $u(t^n, m)$ with $m=1, \ldots, \tilde{M}$
and the initialisation $u(t^n, 0) = u(t^n)$.

Then we can apply the extrapolation:
\begin{eqnarray}
\label{diff_1}
&& u(t^{n+1}) \approx u(t^n, \tilde{M} - 1) +  \\
&& + (\Delta t - \delta t \; (\tilde{M}-1)) \frac{u(t^n, \tilde{M}) -  u(t^n, \tilde{M} - 1)}{\delta t} , \nonumber \\
&& + \frac{(\Delta t - \delta t \; (\tilde{M}-1))^2}{2!} \; \frac{(u(t^n, \tilde{M}) - 2 u(t^n, \tilde{M} - 1) + u(t^n, \tilde{M}-2) )}{(\delta t)^2} + \OT(\delta t^3) . \nonumber
\end{eqnarray}
where we obtain an global error based on $\OT(\delta t^2)$.

\end{remark}

\subsection{Numerical Analysis of the extrapolated AB splitting method}

In the following, we analyze the consistence and order
of the extrapolated AB-splitting method (\ref{extra_1_1})--(\ref{extra_1_2}) and the higher extrapolated AB-splitting method
(\ref{extra_2_1})--(\ref{extra_2_2}).

We assume, we have linear bounded operators $A_1, A_2: \!  {\X} \rightarrow {\X} $, which are semi-discretized with finite difference schemes of the
multiscale Schr\"odinger equation given in (\ref{multi_schroed_1})-(\ref{multi_schroed_2}).

\smallskip
\begin{theorem}{\label{Th1}}
Let $A_1, A_2 \in {\mathcal L(X)} $ are given linear bounded operators
and we consider the abstract Cauchy problem

\begin{equation}\label{eq:ACP}
\begin{array}{c}
{\displaystyle \partial_t c(t) = A c(t) + B c(t), \quad 0 < t
\leq T } \\
\noalign{\vskip 1ex} {\displaystyle c(0)=c_0. }
\end{array}
\end{equation}
\noindent
 Then the  problem (\ref{eq:ACP}) has a unique solution.

We apply the extrapolated AB-splitting method (\ref{extra_1_1})--(\ref{extra_1_2}) and the higher extrapolated AB-splitting method
(\ref{extra_2_1})--(\ref{extra_2_2}) with uniform macroscopic time-steps $\Delta t = t^{n+1} - t^n$ with $n = 0, \ldots, N$.
The time-interval is $[0, T]$ and we have $t^{N+1} = T$ and $t^0 = 0$.

Then, we have the following convergence results:
\begin{enumerate}
\item Extrapolated AB splitting method: The local convergence 
is given as $(C_{AB} + C_{extrapol}) \delta t^2$, where $C_{AB}$ and
$C_{extrapol}$ are bounded constants.
The global convergence of the extrapolated AB splitting method 
is given as $\OT(\delta t)$ and $\Delta t = M \delta t$ is the microscopic time-step.
\item Higher Extrapolated ABA splitting method: The local convergence 
is given as $(C_{ABA} + C_{high extra}) \delta t^3$, where $C_{ABA}$ and
$C_{high extra}$ are bounded constants.
The global convergence of the extrapolated AB splitting method 
is given as $\OT(\delta t^2)$ and $\Delta t /2 = M \delta t$ is the microscopic time-step.
\end{enumerate}

\end{theorem}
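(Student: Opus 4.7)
The plan is to establish the first claim by semigroup theory, and then to analyse each of the two extrapolated schemes by splitting the local truncation error into a pure splitting part, which is already controlled by Remark \ref{rem_AB_ABA}, and an extrapolation part, which is controlled by the Taylor-remainder estimate of Remark \ref{rem_extra_1}. The global convergence in each case then follows from a standard one-step telescoping stability argument.

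Since $A_1, A_2 \in {\mathcal L}(\X)$, the operator $A := A_1 + A_2$ is bounded and therefore generates the uniformly continuous semigroup $\{e^{tA}\}_{t\ge 0}$, so \eqref{eq:ACP} has the unique classical solution $c(t) = e^{tA} c_0$. For the extrapolated AB scheme \eqref{extra_1_1}--\eqref{extra_1_2}, I would introduce the unextrapolated reference output $\Psi_{\mathrm{AB}}(k) = (\textbf{B}_2)^{-1}(\tilde{\textbf{B}}_1)^M \Psi(k-1)$ and apply the triangle inequality
\begin{equation*}
\|c(t^{n+1}) - \Psi(k)\| \;\le\; \|c(t^{n+1}) - \Psi_{\mathrm{AB}}(k)\| + \|\Psi_{\mathrm{AB}}(k) - \Psi(k)\|.
\end{equation*}
The first term is bounded by $C_{AB} \Delta t^2$ by Remark \ref{rem_AB_ABA}. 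For the second, Remark \ref{rem_extra_1} shows that the linear extrapolation from $u_1(k-1,\tilde M-1)$ and $u_1(k-1,\tilde M)$ reconstructs $u_1(k-1,M)$ with remainder $\OT(\delta t^2)$; applying the bounded macroscopic solve $(\textbf{B}_2)^{-1}$ preserves the order, giving the contribution $C_{\mathrm{extrapol}} \delta t^2$ and hence the stated local bound $(C_{AB}+C_{\mathrm{extrapol}})\delta t^2$.

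For the higher extrapolated ABA scheme \eqref{extra_2_1}--\eqref{extra_2_2}, the same decomposition is carried out against the ABA-splitting reference, whose local error is $C_{ABA}\Delta t^3$ by Remark \ref{rem_AB_ABA}. The extrapolation now uses a three-point quadratic Taylor reconstruction, so by Remark \ref{rem_extra_1} its remainder is $\OT(\delta t^3)$; the bounded macroscopic solve $(\textbf{B}_2)^{-1}$ and the symmetric second extrapolation on the other half of the sandwich only combine additively into the constant, yielding $(C_{ABA}+C_{\mathrm{high\,extra}})\delta t^3$. In both cases global convergence follows by Lady Windermere's fan: uniform boundedness of $e^{tA}$ on $[0,T]$ together with the bounded microscopic propagator $\tilde{\textbf{B}}_1$ and macroscopic solve $(\textbf{B}_2)^{-1}$ gives one-step stability, so accumulating local errors over $N = T/\Delta t$ macrosteps loses exactly one order and produces the asserted $\OT(\delta t)$ and $\OT(\delta t^2)$ global bounds.

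The main obstacle I expect is making the Taylor expansion of Remark \ref{rem_extra_1} rigorous at the discrete level, namely justifying that the microscale iterates $m \mapsto u_1(k-1,m)$ are smooth enough in $m$ for the first and second forward differences appearing in \eqref{extra_1_1}--\eqref{extra_1_2} and \eqref{extra_2_1}--\eqref{extra_2_2} to approximate $\partial_t u_1$ and $\partial_t^2 u_1$ to the required order. This reduces to commutator estimates between $\tilde{\textbf{B}}_1$ and the continuous microscopic generator that are uniform in $\delta t$. A secondary technical point is to absorb the possibly large prefactor $\Delta t - \delta t(\tilde M -1) = \OT(\Delta t)$ into $C_{\mathrm{extrapol}}$ and $C_{\mathrm{high\,extra}}$ without degrading the stated $\delta t$-powers; this is where the scaling $\Delta t = M\delta t$ must be used explicitly.
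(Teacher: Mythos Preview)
Your proposal is correct and follows essentially the same route as the paper: split the local error via the triangle inequality into the pure splitting part (bounded by Remark~\ref{rem_AB_ABA}) and the extrapolation part (bounded by Remark~\ref{rem_extra_1}), then accumulate over the macrosteps to get the global order. The paper handles the $\Delta t = M\delta t$ conversion you flag as a secondary obstacle simply by writing $(C_{AB} M^2 + C_{\mathrm{extrapol}})\,\delta t^2$ and absorbing $M^2$ into the constant, and treats the global step by the same crude telescoping you describe.
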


\begin{proof}

\begin{enumerate}
\item Convergence of the extrapolated AB splitting method: \\
We have the following local error function $E(\Delta t, \delta t)= u(t) - u_{extrapol AB}(t)$, where $\Delta t = t^{n+1} - t^n$ and $\delta t = \Delta t /M$ we have the relations
\begin{eqnarray}
&& || u(t) - u_{extrapol AB}(t) || \le \\
&& \le || u(t) -  u_{AB}(t) + u_{AB}(t) - u_{extrapol AB}(t) || , \nonumber
\end{eqnarray}
we know, that the splitting error of an AB-splitting is given in
Remark (\ref{rem_AB_ABA}) with:
\begin{eqnarray}
|| u(t^{n+1}) -  u_{AB}(t^{n+1}) || \le C_{AB} \Delta t^2 ||u(t^n)||. 
\end{eqnarray}
Further, the error of the first order extrapolation method is given as, see Remark \ref{rem_extra_1}
\begin{eqnarray}
|| u_{AB}(t^{n+1}) - u_{extrapol AB}(t^{n+1}) || \le C_{extrapol} \delta t^2 ||u(t^n)||.
\end{eqnarray}
Then, we obtain:
\begin{eqnarray}
|| u(t^{n+1}) - u_{extrapol AB}(t^{n+1}) || && \le  C_{AB} \Delta t^2 + C_{extrapol}) \delta t^2 \nonumber \\
&& \le (C_{AB} M^2 + C_{extrapol}) \delta t^2 ||u(t^n)||.
\end{eqnarray}
Further the global error is given as:
\begin{eqnarray}
E_{global}(T) && = (N+1) M ||E(\Delta t, \delta t)|| = \\
&& = (N+1) M \delta t || \frac{ E(\Delta t, \delta t)}{\delta t} || \le \OT(\delta t). \nonumber
\end{eqnarray}

\item Convergence of the higher extrapolated ABA splitting method: \\
We have the following local error function $E_{ABA}(\Delta t, \delta t)= u(t) - u_{extrapol ABA}(t)$, where $\Delta t = t^{n+1} - t^n$ and $\delta t = (\Delta t /2) /M$ we have the relations
\begin{eqnarray}
&& || u(t) - u_{extrapol ABA}(t) || \le \\
&& \le  || u(t) -  u_{ABA}(t) + u_{ABA}(t) - u_{extrapol ABA}(t) || , \nonumber
\end{eqnarray}
we know, that the splitting error of an AB-splitting is given in
Remark (\ref{rem_AB_ABA}) with:
\begin{eqnarray}
|| u(t^{n+1}) -  u_{ABA}(t^{n+1}) || \le C_{ABA} \Delta t^3 ||u(t^n)||. 
\end{eqnarray}
Further, the error of the second order extrapolation method is given as, see Remark \ref{rem_extra_1}
\begin{eqnarray}
|| u_{ABA}(t^{n+1}) - u_{extrapol ABA}(t^{n+1}) || \le C_{higher extra}) \delta t^3 ||u(t^n)||.
\end{eqnarray}
Then, we obtain:
\begin{eqnarray}
&& || u(t^{n+1}) - u_{extrapol ABA}(t^{n+1}) || \le  C_{ABA} \Delta t^2 + C_{higher extra}) \delta t^3 \nonumber \\
&& \le (C_{ABA} (2 M)^2 + C_{extrapol}) \delta t^3 ||u(t^n)||.
\end{eqnarray}
Further the global error is given as:
\begin{eqnarray}
E_{global}(T) && = (N+1) 2 M ||E_{ABA}(\Delta t, \delta t)|| = \\
&& = (N+1) 2 M \delta t || \frac{ E_{ABA}(\Delta t, \delta t)}{\delta t} || \le \OT(\delta t^2). \nonumber
\end{eqnarray}

\end{enumerate}

\end{proof}

\begin{remark}
If we apply of higher order splitting schemes, e.g., ABA splitting method,
for the multiscale solver, we also need higher order extrapolation schemes.
Therefore, we have taken into account higher order extrapolation schemes
for improved splitting approaches.
\end{remark}

\section{Numerical Experiments}
\label{numeric}

In the following experiment, we test the multiscale solver methods for the one-dimensional timedependent Schr\"odinger equation with
a highly oscillating potential. \\

\subsection{Test example 1}

For the instationary Schr\"odinger equation, which is 
given in Equation (\ref{schroed_1}), we test the following methods:
\begin{itemize}
\item Finite difference scheme for multiscale Schr\"odinger equation (unsplitted),
\item AB splitting for the  multiscale Schr\"odinger equation,
\item HMM  for the  multiscale Schr\"odinger equation.
\end{itemize}

We apply the initial conditions: \\
$\Psi(x, 0) = \exp(-0.5 (\frac{x- x_c}{s})^2)  cos(\frac{2 \pi (x-x_c)}{\lambda}) + i \; \exp(-0.5 (\frac{x- x_c}{s})^2)  sin(\frac{2 \pi (x-x_c)}{\lambda}) $, where $x \in [0, L]$, where we have $L= 4 \; 10^{-9} \; [m]$, $\lambda = L / 40$ and $s = L/25$.

Further, we assume $V(x,t)=0$ and $V(x,t) = e \;x^2$, \\
where $e =1.6021766 \; 10^{-19} \; [C]$ is the charge of the electron.
We apply for all the methods implicit time-discretization methods
and restrict us to the time-step based on the smallest time-scale.
We apply therefore $\Delta x = 10^{-12}$ and $\Delta t = 10^{-20}$.

In Figure \ref{insta}, we have the numerical results of the
multiscale Schr\"odinger equation with finite difference scheme
(without splitting). This method, we apply for a reference solution.
\begin{figure}[ht]
\begin{center}  
\includegraphics[width=13.0cm,angle=-0]{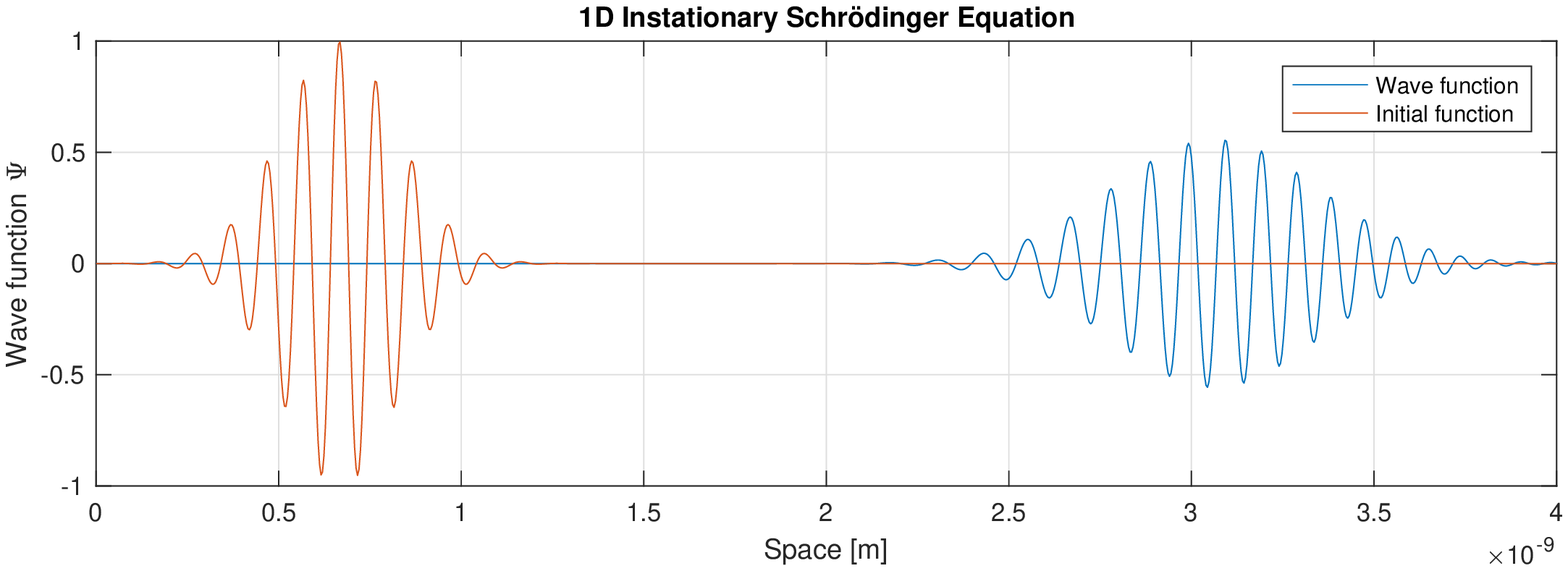}
\includegraphics[width=13.0cm,angle=-0]{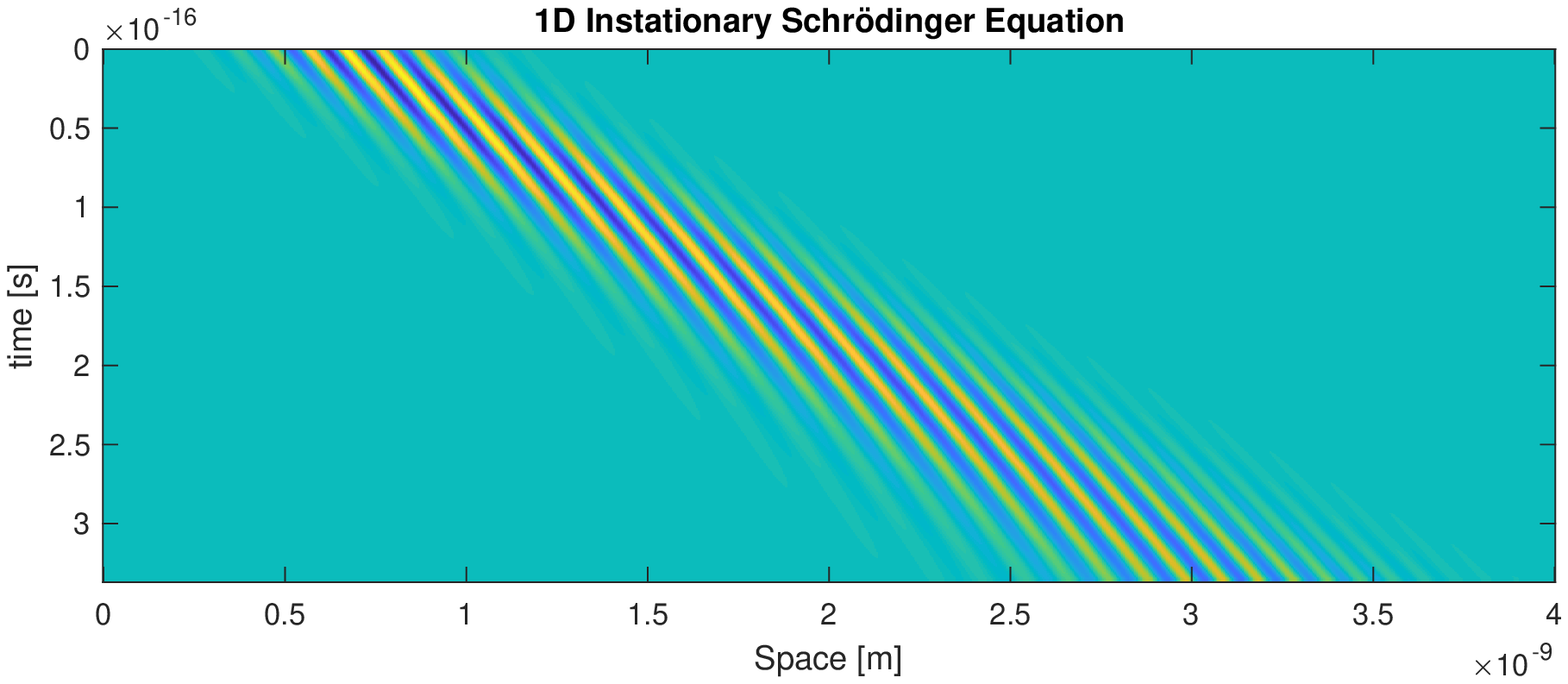}
\end{center}
\caption{\label{insta} Simulations with the instationary Schr\"odinger equation
with finite difference method (without splitting).}
\end{figure}

\begin{remark}
All the numerical methods are stable but we are restricted based on the
small time-steps of the microscopic scale.
Therefore also modified multiscale methods need to much computational
amount such that we tested in a next experiment more appropriate
time-discretization and solver methods.
\end{remark}

\subsection{Test example 2}

We apply the multiscale Schr\"odinger equation, which is given as:
\begin{align}
\label{diff_1_0}
i \epsilon \frac{\partial \Psi(x,t)}{\partial t} &= - \epsilon^2 \frac{\partial^2 \Psi(x,t)}{\partial x^2} + V(x) \Psi(x,t) .
\end{align}

For the studying the different methods, we apply the following error:
\begin{eqnarray}
err_{Method}(\Delta t)&& = norm ( u_{exact}(T) - u_{Method}(T) ) = \\
&& = \sum_{j=1}^J \Delta x \; |u_{exact}(x_j, T) -u_{Method}(x_j, T)| ,
\end{eqnarray}
where $\Delta t$ is the time step, $J$ is the number of spatial steps.
Further $u_{exact}(t_n)$ is the solution with the FD
scheme (\ref{schroed_1_5}) and $u_{Method}(t_n)$ is the solution of the
different methods, means $Method = \{AB, HMM-AB, Extra-AB \}$, see the Algorithms.

The numerical results of the AB-splitting method for the
multiscale Schr\"odinger equation is given in Figure \ref{AB_splitt}.
\begin{figure}[ht]
\begin{center}  
\includegraphics[width=10.0cm,angle=-0]{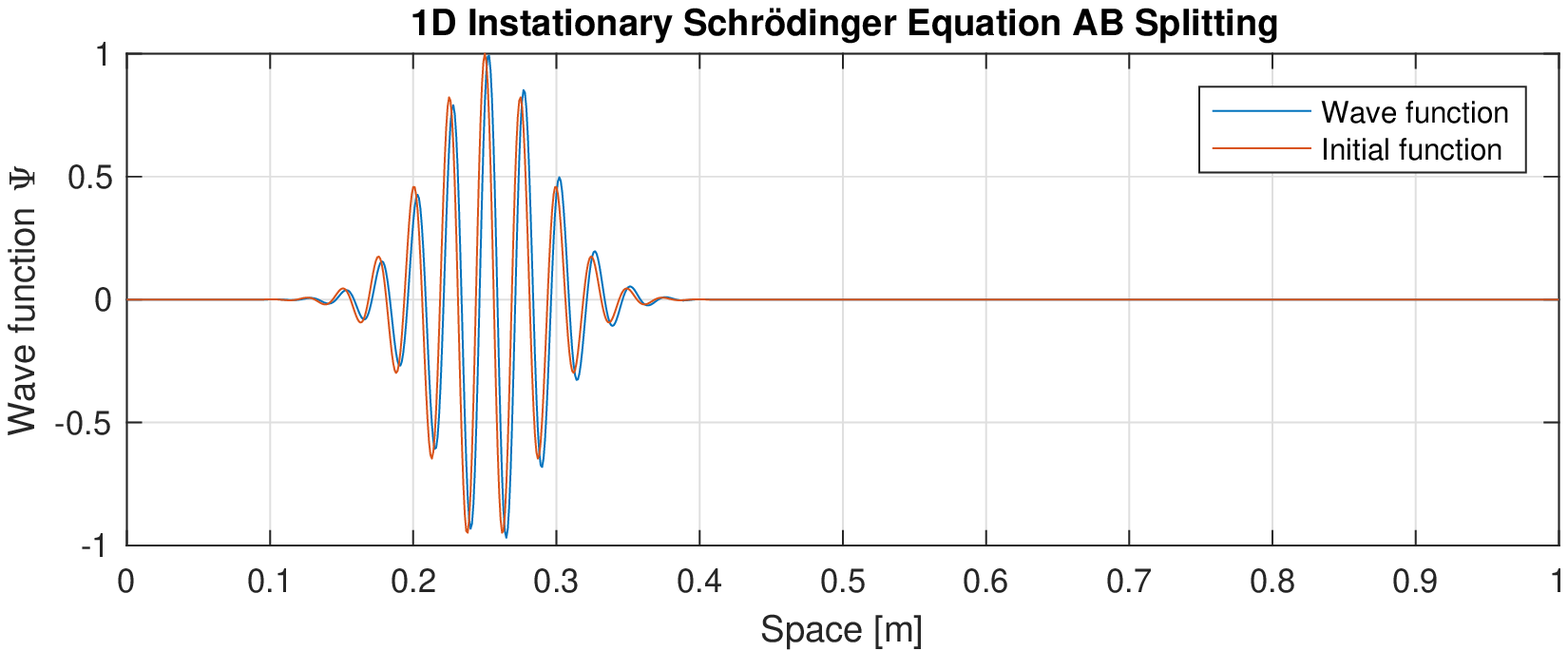}
\includegraphics[width=10.0cm,angle=-0]{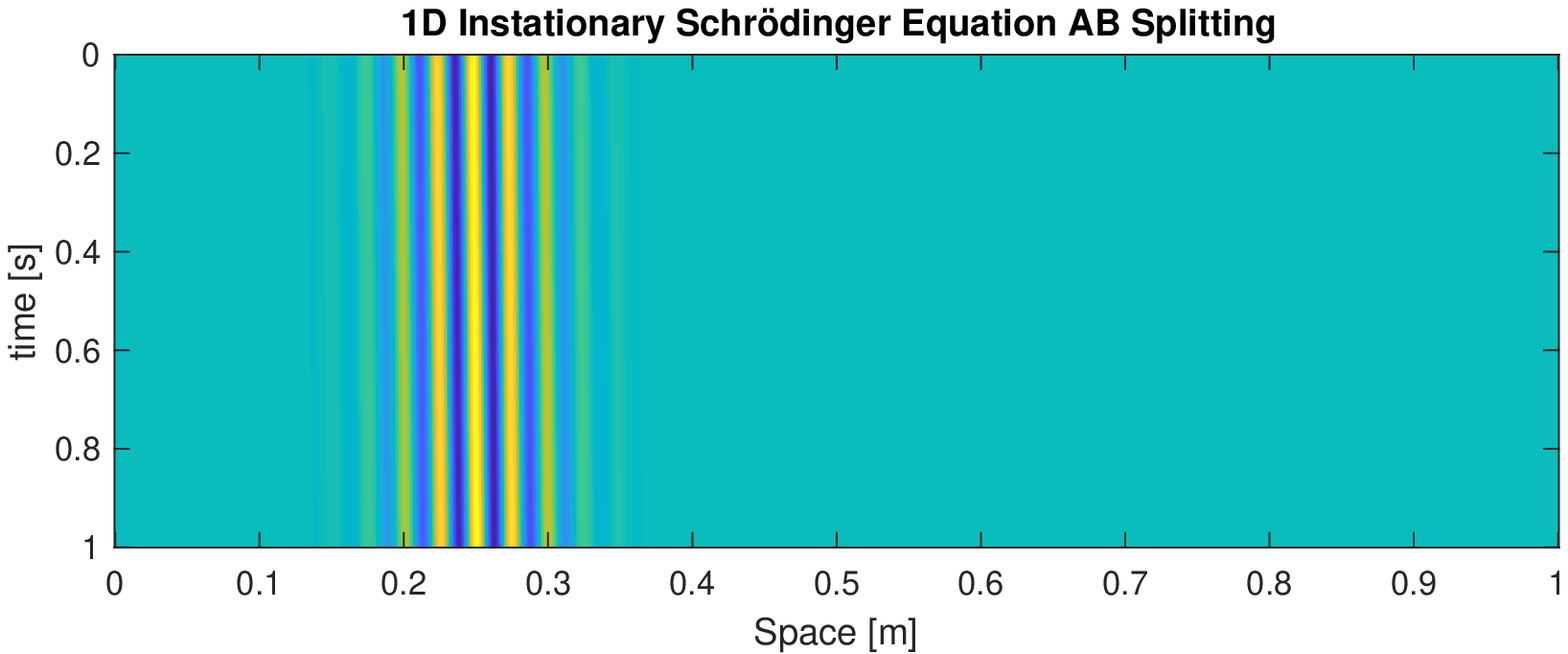}
\end{center}
\caption{\label{AB_splitt} Simulations with the multiscale Schr\"odinger equation with AB-splitting method.}
\end{figure}

\begin{figure}[ht]
\begin{center}  
\includegraphics[width=10.0cm,angle=-0]{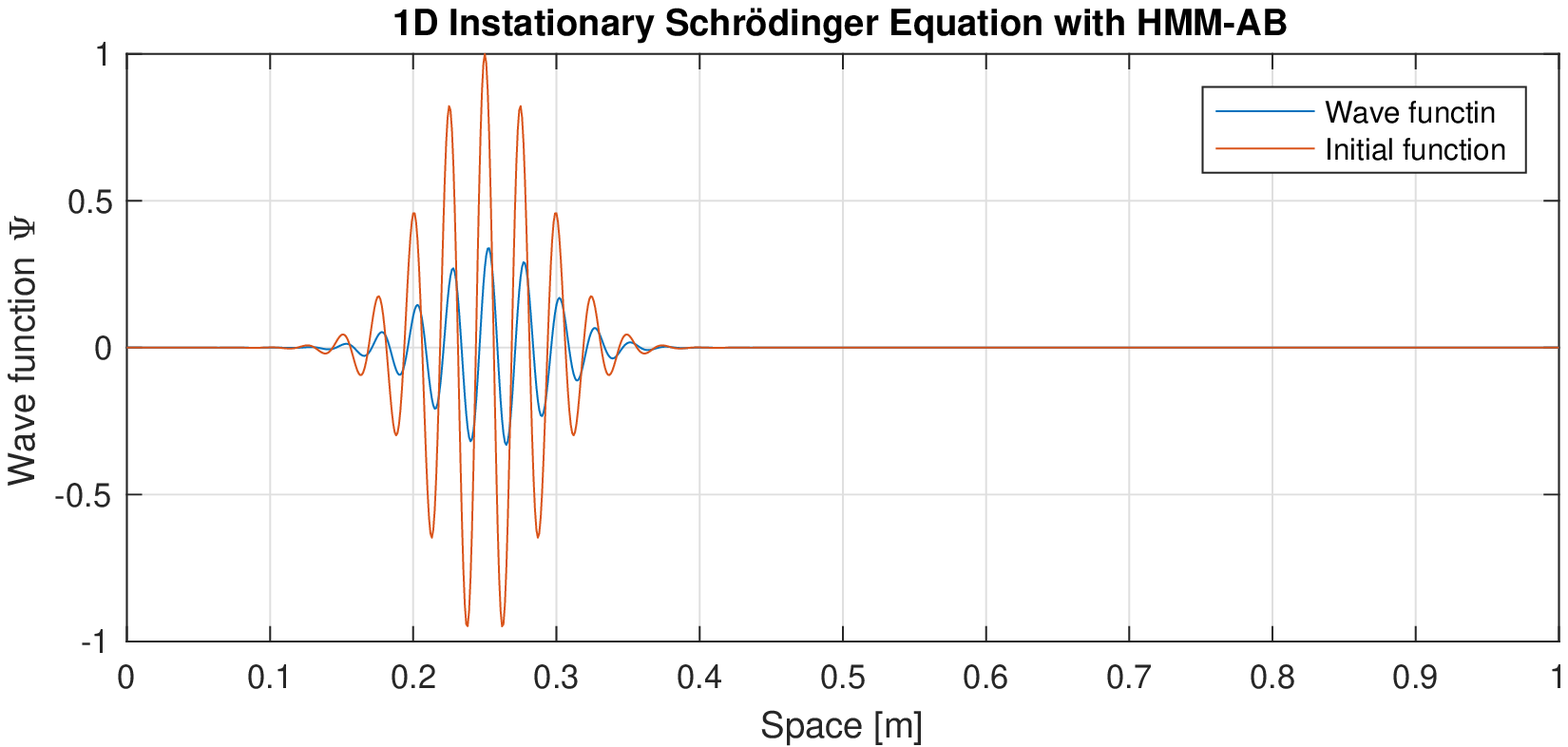}
\includegraphics[width=10.0cm,angle=-0]{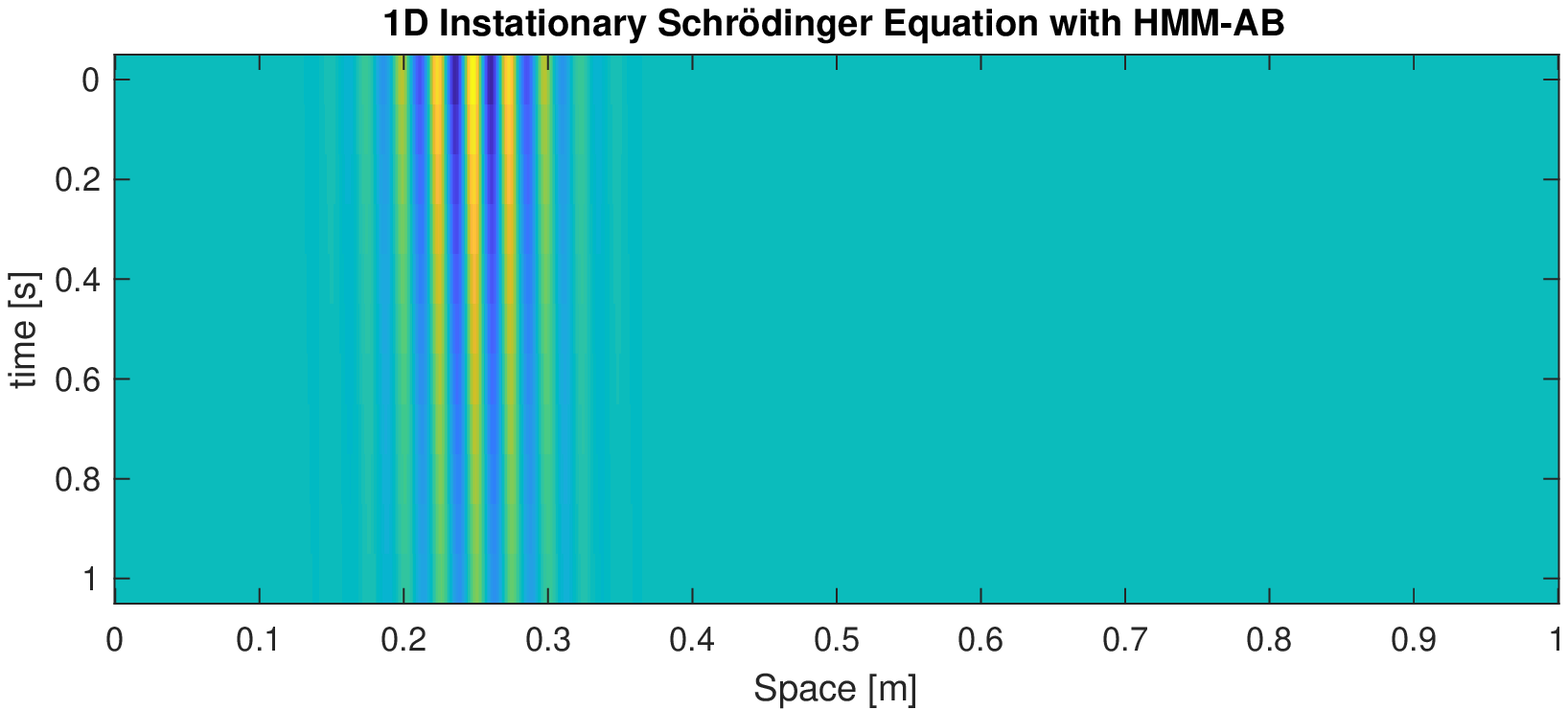}
\end{center}
\caption{\label{HMM} Simulations with the multiscale Schr\"odinger equation with HMM.}
\end{figure}

In the following, we have the error of the different schemes in Table \ref{error_1}.

\begin{table}[ht]
\centering
\begin{tabular}{| c | c | c | c |}
\hline
Numerical method & Numerical error & M & $\tilde{M}$  \\
\hline \hline
AB Splitting & $6.081976027577089e-15$ & $1$ & \\
AB Splitting & $6.081976027555089e-15$ & $10$ & \\
AB Splitting & $6.081976027555089e-15$ & $100$ & \\
\hline
ABA Splitting & $5.918971525959246e-15$ & $1$ & \\
ABA Splitting & $5.918971525959246e-15$ & $10$ & \\
ABA Splitting & $5.918971525959246e-15$ & $100$ & \\
\hline
HMM-AB & $6.081976027555089e-15$ & $1$ & $1$ \\
HMM-AB & $4.404021547293822e-12$ & $10$ & $5$ \\
HMM-AB & Takes too long time & $100$ & $50$ \\
\hline
Extra-AB & $6.081976027555089e-15$ & $1$ & $1$\\
Extra-AB & $5.143436597696588e-15$ & $10$ & $5$ \\
Extra-AB & $5.018633361834493e-15$ & $100$ & $50$ \\
\hline
HigherExtra-AB & $6.081976027555089e-15$ & $1$ & $1$\\
HigherExtra-AB & $7.057484576752767e-15$ & $10$ & $5$ \\
HigherExtra-AB & $9.995071997473121e-15$ & $100$ & $50$ \\
\hline
\end{tabular}
\caption{\label{error_1} The numerical errors of the multiscale methods.}
\end{table}

Further, we present the computational time of the different methods
in Table \ref{comp-time}:
\begin{table}[ht]
\centering
\begin{tabular}{| c | c | c |}
\hline
Numerical Method & Computational Time in sec & Time-step \\
\hline \hline
FD-Scheme & $4.691497$ & $0.0008$  \\
FD-Scheme & $8.785431$ & $0.0004$  \\
FD-Scheme & $18.329370$ & $0.0002$  \\
\hline
AB-Splitting & $4.560601$ & $0.0008$  \\
AB-Splitting & $9.263053$ & $0.0004$  \\
AB-Splitting & $18.514509$ & $0.0002$  \\
\hline
ABA-Splitting & $6.657831$ & $0.0008$  \\
ABA-Splitting & $13.508871$ & $0.0004$  \\
ABA-Splitting & $27.015384$ & $0.0002$  \\
\hline
BAB-Splitting & $6.731909$ & $0.0008$  \\
BAB-Splitting & $13.424463$ & $0.0004$  \\
BAB-Splitting & $28.469047$ & $0.0002$  \\
\hline
Extra-AB & $5.464744$ & $0.0008$  \\
Extra-AB & $10.982060$ & $0.0004$  \\
Extra-AB & $23.133747$ & $0.0002$  \\
\hline
HigherExtra-AB & $7.731819$ & $0.0008$  \\
HigherExtra-AB & $16.434144$ & $0.0004$  \\
HigherExtra-AB & $32.633806$ & $0.0002$  \\
\hline
\end{tabular}
\caption{\label{comp-time} The computational time of the different methods.}
\end{table}

\begin{remark}
Based on the separation of the macroscopic and microscopic operator, we can apply much more adapted time-steps. Further, we save more computational time to
reduce the large amount of microscopic time-steps with extrapolation methods.
The best results are obtained with the extrapolated AB splitting method,
while we separate the operators and need only a smaller number of
microscopic time-steps. Based on higher extrapolation schemes with 
ABA-splitting approaches, we could also improve the accuracy. 
\end{remark}

\section{Conclusion}
\label{concl}

We presented a novel multiscale method, which is based on extrapolation methods and operator splitting approaches. 
Based on the separation of the microscopic and macroscopic operator, we
could reduce the computational time. Further, we applied sufficient microscopic time-steps and extrapolate to the macroscopic time-step, which also 
reduce the computational amount.
Based on the reduced numbers of finer time-steps, we could achieve faster numerical results with the same accurate results as for the standard AB-splitting scheme. Such novel schemes allow to flexiblise the 
standard operator splitting methods and modify such schemes to multiscale methods In future, we will test the new splitting approaches to higher dimensional Schr\"odinger equations and present the numerical analysis of the different schemes.

\bibliographystyle{plain}

\begin{thebibliography}{10}


\bibitem{brez1994}
C.~Brezinski and M.~Redivo-Zaglia.
\newblock{\em Extrapolation methods}.
\newblock Applied Numerical Mathematics, 15(2): 123-131, 1994.

\bibitem{butcher2008}
J.C.~Butcher.
\newblock{\em Numerical methods for ordinary differential equations.}
\newblock 2nd edition, John Wiley \& Sons, Inc. Hoboken, New Jersey, USA, 2008.

\bibitem{we_2007}
W.~E., B.~Engquist, X.~Li, W.~Ren, and E.~Vanden-Eijnden.
\newblock{\em Heterogeneous Multiscale Methods: A Review.}
\newblock Communications in Computational Physics, 2(3), 367-450, 2007.


\bibitem{fargei05}
I.~Farago and J.~Geiser.
\newblock {\em Iterative Operator-Splitting methods for Linear Problems.}
\newblock International Journal of Computational Science and Engineering, 3(4), 255-263, 2007.

\bibitem{geiser_2009_1}
J.~Geiser.
\newblock{\em Decomposition Methods for Partial Differential Equations: Theory
and Applications in Multiphysics Problems.}
\newblock Numerical Analysis and Scientific Computing Series, Taylor \& Francis Group, Boca Raton, London, New York, 2009.

\bibitem{geiser2011}
J.~Geiser.
\newblock{\em Iterative Splitting Methods for Differential Equations}.
Chapman \& Hall/CRC Numerical Analysis and Scientific Computing Series, edited by Magoules and Lai, 2011. 

\bibitem{geiser_2016}
J.~Geiser.
\newblock{\em Multicomponent and Multiscale Systems: Theory, Methods, and Applications in Engineering}.
\newblock Springer, Cham, Heidelberg, New York, Dordrecht, London, 2016.

\bibitem{geiser2017}
J.~Geiser.
\newblock{\em Iterative splitting method as almost asymptotic symplectic integrator for stochastic nonlinear Schr\"odinger equation}.
\newblock AIP Conference Proceedings 1863, 560005, 2017.

\bibitem{geiser2018}
J.~Geiser.
\newblock{\em Multiscale Modelling and Splitting Approaches for Fluids composed of Coulomb-interacting Particles.}
\newblock Special-Issue: Problems with Multiple Time-scales Mathematical, Journal: Mathematical and Computer Modelling of Dynamical Systems, edited by J.Geiser, Taylor and Francis, Abingdon, UK, accepted January 2018.

\bibitem{griffiths2004}
D.J.~Griffiths.
\newblock{\em Introduction to Quantum Mechanics.}
\newblock 2nd edn, Prentice Hall, Upper Saddle River, NJ, 2004.

\bibitem{hai96}
E.~Hairer and G.~Wanner.
\newblock {\em Solving Ordinary Differential Equations II.}
\newblock SCM, Springer-Verlag, Berlin, Heidelberg, New York, No. 14, 1996.

\bibitem{kev2009}
I.G.~Kevrekidis and G.~Samaey.
\newblock  Equation-free multiscale computation: Algorithms and applications.
\newblock {\em Annual Review in Physical Chemistry} 60:321--344, 2009. 

\bibitem{singh2016}
P.~Singh.
\newblock{\em High accuracy computational methods for the semiclassical Schr\"odinger equation}.
\newblock PhD Thesis, Kingfs College DAMTP, Centre for Mathematical Sciences, University of Cambridge, 2016.

\bibitem{stra68}
G.~Strang.
\newblock {\em On the construction and comparison of difference schemes}.
\newblock SIAM J. Numer. Anal., 5, 506-517, 1968.

\bibitem{tak2008}
L.A.~Takhtajan.
\newblock{\em Quantum mechanics for mathematicians.}
\newblock Graduate Studies in Mathematics 95, Amer. Math. Soc., Providence, Rhode Island, 2008.

\bibitem{trotter59}
H.F.~Trotter.
\newblock{\em On the product of semi-groups of operators.}
\newblock Proceedings of the American Mathematical Society, 10(4), 545-551, 1959.

\end{thebibliography}

\end{document}